\def\bP{{\mathbf P}}
\def\bQ{{\mathbf Q}}
\def\bN{{\mathbf N}}
\def\bX{{\mathbf X}}
\def\bV{{\mathbf V}}
\def\bW{{\mathbf W}}
\def\bC{{\mathbf C}}
\def\bE{{\mathbf e}}
\theoremstyle{theorem}
\newtheorem*{theorem}{Theorem}
\theoremstyle{corollary}
\newtheorem*{corollary}{Corollary}
\theoremstyle{claim}
\newtheorem*{claim}{Claim}
\theoremstyle{definition}
\begin{document}

\title[Elementary Proof Of The Gauss-Bonnet Theorem In $\Bbb R^3$]{Elementary Proofs Of The Gauss-Bonnet Theorem And Other Integral Formulas In $\Bbb R^3$}
\date{September 15, 2015}
\author{Daniel Mayost}
\address{Office Of The Superintendent Of Financial Institutions, Canada}
\email{Daniel.Mayost@osfi-bsif.gc.ca}
\maketitle

\begin{abstract}
For a compact differentiable surface with boundary embedded in $\Bbb R^3$, we give simple proofs of the Gauss-Bonnet theorem, Poincar\'{e}-Hopf theorem, and several other integral formulas.  We complete all of the proofs without using fundamental or differential forms. 
\end{abstract}

\section{Introduction}

In this note we prove several integral formulas involving the principal curvatures of a surface, including the Gauss-Bonnet theorem.  All of the proofs are elementary in the sense that we do not place any co-ordinates on the surface, and hence we do not need to invoke the theory of fundamental or differential forms.  The results will follow from applying Stokes' theorem directly to the surface, and studying objects associated with the surface that are co-ordinate invariant.  In particular, the principal curvatures of the surface will arise as the directional derivatives of the normal vector to the surface.

\section{Preliminaries}

All of the identities we will prove are for a compact, oriented $C^\infty$ surface $M$ with boundary $\partial M$ embedded in $\Bbb R^3$ with volume element $dA$.  At each point on $M$ we denote the position vector by $\bX$ and the normal vector by $\bN$.  Our results will follow from this variation of Stokes' Theorem:    

\begin{claim}
Let $f$ and $g$ be differentiable functions that map an $\Bbb R^3$-neighborhood of $M$ to $\Bbb R$.  If
$\bP$ and $\bQ$ are orthonormal vector fields on $M$ that are not necessarily continuous, and are oriented so that $\bP\times\bQ=\bN$ everywhere on $M$, then:
\begin{equation*}
\int_{\partial M}f\, dg=\int_M\left[
 \left(\nabla_{\bP}f\right)\left(\nabla_{\bQ}g\right)
-\left(\nabla_{\bQ}f\right)\left(\nabla_{\bP}g\right)
\right]\,dA\end{equation*}
\end{claim}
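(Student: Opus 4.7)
The plan is to reduce the claim to the classical Stokes' theorem in $\Bbb R^3$ applied to the vector field $\mathbf{F}=f\nabla g$, and then to verify the pointwise identification of the integrand in the orthonormal frame $(\bP,\bQ,\bN)$. First, I would rewrite the line integral along $\partial M$ as $\int_{\partial M} f\,dg=\int_{\partial M}(f\nabla g)\cdot d\bX$, interpreting $dg$ as $\nabla g\cdot d\bX$ along curves in the ambient space where $g$ is defined. Since $f$ and $g$ are smooth on a neighborhood of $M$ in $\Bbb R^3$, $\mathbf{F}$ is a genuine smooth vector field on a neighborhood of $M$.

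Next I would apply the classical Stokes' theorem to $M$ with unit normal $\bN$, yielding
\begin{equation*}
\int_{\partial M} f\,dg=\int_M (\nabla\times(f\nabla g))\cdot\bN\,dA.
\end{equation*}
Using the standard product rule $\nabla\times(f\nabla g)=\nabla f\times\nabla g+f(\nabla\times\nabla g)=\nabla f\times\nabla g$, the surface integral collapses to $\int_M(\nabla f\times\nabla g)\cdot\bN\,dA$. This step is independent of $\bP$ and $\bQ$, and relies only on the continuity of $\bN$, which is guaranteed by the orientability of $M$.

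Finally, I would identify the integrand pointwise. At each point of $M$ the triple $(\bP,\bQ,\bN)$ is an orthonormal basis of $\Bbb R^3$ with $\bP\times\bQ=\bN$, so I may expand $\nabla f=(\nabla_\bP f)\bP+(\nabla_\bQ f)\bQ+(\nabla_\bN f)\bN$, and similarly for $\nabla g$. Computing $\nabla f\times\nabla g$ in this frame and dotting with $\bN$, only the $\bP\times\bQ$ and $\bQ\times\bP$ terms contribute, giving exactly $(\nabla_\bP f)(\nabla_\bQ g)-(\nabla_\bQ f)(\nabla_\bP g)$.

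The only subtlety worth flagging is that $\bP$ and $\bQ$ need not be continuous, so one could not apply Stokes' theorem directly using them. The argument sidesteps this because Stokes is applied to the ambient smooth object $f\nabla g$ and the continuous normal $\bN$; the frame $(\bP,\bQ)$ is used only for the algebraic expansion of $(\nabla f\times\nabla g)\cdot\bN$ at each point, which is valid regardless of continuity. I do not anticipate any real obstacle beyond bookkeeping the signs and orientation.
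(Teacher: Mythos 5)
Your proposal is correct and follows essentially the same route as the paper: rewrite the boundary integral as $\int_{\partial M}(f\nabla g)\cdot d\bX$, apply classical Stokes with the identity $\nabla\times(f\nabla g)=\nabla f\times\nabla g$, and then expand pointwise in the frame $(\bP,\bQ,\bN)$. The only cosmetic difference is that the paper expands $\nabla g\times\bN$ in the tangent basis $\{\bP,\bQ\}$ rather than expanding both gradients in the full orthonormal basis, which is an equivalent computation.
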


\begin{proof}
We start with:
\begin{equation*}
\int_{\partial M}f\, dg=\int_{\partial M} f\,\nabla g\cdot d\bX.\end{equation*}
Using Stokes' Theorem and the identity:
\begin{equation*} \text{curl}\,(f\nabla g)=\nabla f\times\nabla g\end{equation*}
gives:
\begin{align*}
\int_{\partial M}f\,dg&=
\int_M \nabla f\times \nabla g\cdot\bN\,dA\\
&=\int_M
\nabla f\cdot\nabla g\times\bN\,dA.\end{align*}
Since $\bP$ and $\bQ$ form an orthonormal basis for the tangent plane at any point on $M$, any tangent vector can be written as the sum of its projection onto $\bP$ and its projection onto $\bQ$.  Therefore:
\begin{align*}
\nabla g\times\bN &= (\nabla g\times\bN\cdot\bP)\bP
+(\nabla g\times\bN\cdot\bQ)\bQ\\
&= (\nabla g\cdot\bN\times\bP)\bP
+(\nabla g\cdot\bN\times\bQ)\bQ\\
&= (\nabla g\cdot\bQ)\bP
-(\nabla g\cdot\bP)\bQ\\
&= \left(\nabla_{\mathbf Q}g\right)\bP-\left(\nabla_{\mathbf P}g\right)\bQ.\end{align*}
We finally get:
\begin{align*}
\int_{\partial M}f\, dg&=
\int_M\left[
  (\nabla_{\bQ}g)(\nabla f\cdot\bP)
- (\nabla_{\bP}g)(\nabla f\cdot\bQ)
\right]\,dA\\
&=\int_M\left[
 (\nabla_{\bP}f)(\nabla_{\bQ}g)
-(\nabla_{\bQ}f)(\nabla_{\bP}g)
\right]\,dA.
\end{align*}
\end{proof}

In what follows we will assume that $\bP$ and $\bQ$ are orthonormal principal directions on $M$ with corresponding principal curvatures $\kappa_1$ and $\kappa_2$, and that they are oriented so that $\bP\times\bQ=\bN$ everywhere on $M$.  We will use the subscripts $p$ and $q$ to denote the directional derivatives of a function in the directions $\bP$ and $\bQ$, respectively.  An immediate corollary of the above claim is that if $\bV$ and $\bW$ are vector fields on $\Bbb R^3$ then:
\begin{equation}
\int_{\partial M}\bV\cdot d\bW
=\int_M \left( \bV_p\cdot\bW_q
- \bV_q\cdot\bW_p \right)\,dA.\label{Ur}
\end{equation}
Since $\bP$ and $\bQ$ are not required to be continuous in the above equation, we will not run into difficulties if $M$ contains umbilical points.  With these definitions we have:
\begin{equation}
\bN_p=-\kappa_1\bP,\qquad
\bN_q=-\kappa_2\bQ.\label{Rod}\end{equation}
Finally, we define the mean curvature $H$ and Gaussian curvature $K$ at points on $M$ by:
\begin{equation*}
H=\frac12(\kappa_1+\kappa_2),\quad
K=\kappa_1\kappa_2.\end{equation*}

\section{Two Curvature Identities}

We begin with two simple consequences of identity~\eqref{Ur}: 

\begin{claim}
If $\bV$ is a vector field on $\Bbb R^3$ then:
\begin{align}
\int_{\partial M}(\bV\times\bN)\cdot d\bX&=
-\int_M\left( \bV_p\cdot\bP+\bV_q\cdot\bQ+2H\,\bV\cdot\bN \right)\,dA\label{hid}\\
\int_{\partial M}(\bV\times\bN)\cdot d\bN&=
\int_M\left(\kappa_2\,\bV_p\cdot\bP+\kappa_1\,\bV_q\cdot\bQ+2K\,\bV\cdot\bN\right)\,dA.
\label{kid}\end{align}
\end{claim}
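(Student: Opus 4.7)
The plan is to apply identity~\eqref{Ur} twice, each time replacing the vector field $\bV$ appearing in \eqref{Ur} by $\bV\times\bN$: once with $\bW=\bX$ to obtain \eqref{hid}, and once with $\bW=\bN$ to obtain \eqref{kid}. Since $\bX_p=\bP$ and $\bX_q=\bQ$, and by \eqref{Rod} $\bN_p=-\kappa_1\bP$ and $\bN_q=-\kappa_2\bQ$, the two right-hand sides reduce to integrals over $M$ of expressions built from $(\bV\times\bN)_p\cdot\bQ$ and $(\bV\times\bN)_q\cdot\bP$, weighted either by $1$ (for \eqref{hid}) or by the principal curvatures (for \eqref{kid}). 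In fact the integrand for \eqref{kid} is exactly $-\kappa_2\,(\bV\times\bN)_p\cdot\bQ + \kappa_1\,(\bV\times\bN)_q\cdot\bP$, so both identities reduce to computing the same two scalar quantities.

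The next step is to expand these two quantities using the product rule on the cross product together with \eqref{Rod}, giving $(\bV\times\bN)_p = \bV_p\times\bN - \kappa_1\bV\times\bP$ and $(\bV\times\bN)_q = \bV_q\times\bN - \kappa_2\bV\times\bQ$. Dotting into $\bQ$ and $\bP$ respectively, the $\bV_p\times\bN$ and $\bV_q\times\bN$ pieces collapse under the scalar triple product to multiples of $\bV_p\cdot\bP$ and $\bV_q\cdot\bQ$ using $\bN\times\bQ=-\bP$ and $\bN\times\bP=\bQ$, while the $\kappa_i\,\bV\times\bP$ and $\kappa_i\,\bV\times\bQ$ pieces collapse to multiples of $\bV\cdot\bN$ via $\bP\times\bQ=\bN$. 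Substituting the resulting expressions into the integrand of \eqref{Ur} should yield \eqref{hid} directly after recognizing $\kappa_1+\kappa_2=2H$, and \eqref{kid} after the analogous step produces two copies of $\kappa_1\kappa_2\,\bV\cdot\bN$ that combine to $2K\,\bV\cdot\bN$.

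I expect no conceptual obstacle here; the difficulty is purely bookkeeping. The place to be careful is the signs of the scalar triple products, because a single misplaced sign would destroy the clean $2H$ or $2K$ cancellation that is the whole point of the result. A useful sanity check at the end is that both right-hand sides are manifestly invariant under the simultaneous swap $(\bP,\kappa_1)\leftrightarrow(\bQ,\kappa_2)$, which reflects the fact that the frame orientation, not the labeling of principal directions, is what really matters.
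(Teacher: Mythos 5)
Your proposal is correct and follows exactly the paper's own route: apply~\eqref{Ur} with $\bV\times\bN$ in place of $\bV$ and with $\bW=\bX$ (resp.\ $\bW=\bN$), expand via the product rule and~\eqref{Rod}, and simplify the resulting scalar triple products using $\bN\times\bQ=-\bP$, $\bN\times\bP=\bQ$, and $\bP\times\bQ=\bN$. The sign bookkeeping you flag is indeed the only delicate point, and your reduction of both identities to the same two quantities $(\bV\times\bN)_p\cdot\bQ$ and $(\bV\times\bN)_q\cdot\bP$ matches the paper's computation.
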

\begin{proof}
Using equations~\eqref{Ur} and~\eqref{Rod} we get:
\begin{align*}
&\int_{\partial M}(\bV\times\bN)\cdot d\bX\\
=&
\int_M\left(
 \bV_p\times\bN\cdot\bQ-\kappa_1\bV\times\bP\cdot\bQ
-\bV_q\times\bN\cdot\bP+\kappa_2\bV\times\bQ\cdot\bP\right)\,dA\\
=&\int_M\left(
 \bV_p\cdot\bN\times\bQ-\kappa_1\bV\cdot\bP\times\bQ
-\bV_q\cdot\bN\times\bP+\kappa_2\bV\cdot\bQ\times\bP\right)\,dA\\
=&\int_M\left(
-\bV_p\cdot\bP-\bV_q\cdot\bQ-2H\,\bV\cdot\bN\right)\,dA\\
\vspace{1\jot}
&\int_{\partial M}(\bV\times\bN)\cdot d\bN\\
=&\int_M\left(
-\kappa_2\bV_p\times\bN\cdot\bQ+\kappa_1\kappa_2\bV\times\bP\cdot\bQ
+\kappa_1\bV_q\times\bN\cdot\bP-\kappa_1\kappa_2\bV\times\bQ\cdot\bP\right)\,dA\\
=&\int_M\left(
-\kappa_2\bV_p\cdot\bN\times\bQ+\kappa_1\kappa_2\bV\cdot\bP\times\bQ
+\kappa_1\bV_q\cdot\bN\times\bP-\kappa_1\kappa_2\bV\cdot\bQ\times\bP\right)\,dA\\
=&\int_M\left(\kappa_2\,\bV_p\cdot\bP+\kappa_1\,\bV_q\cdot\bQ+2K\,\bV\cdot\bN\right)\,dA.
\end{align*}
\end{proof}

Equation~\eqref{hid} is a generalization of the divergence theorem for surfaces to vector fields on $\Bbb R^3$ that appears for example in \S7 of~\cite{Simon}.  With these two identities we can quickly prove several well-known results. 

Let $\{\bE_1,\bE_2,\bE_3\}$ denote the standard basis for $\Bbb R^3$.  Successively setting $\bV$ equal to these basis elements in~\eqref{hid} and~\eqref{kid} gives the vector identities:
\begin{align*}
\int_{\partial M}\bN\times d\bX&=-2\int_M H\,\bN\,dA\\
\int_{\partial M}\bN\times d\bN&= 2\int_M K\,\bN\,dA.\end{align*}
Setting $\bV$ equal to the vectors $\bE_1\times\bX$, $\bE_2\times\bX$ and $\bE_3\times\bX$ in~\eqref{hid} and~\eqref{kid} gives the vector identities:
\begin{align*}
\int_{\partial M}\bX\times(\bN\times d\bX)&=-2\int_M H\,\bX\times\bN\,dA\\
\int_{\partial M}\bX\times(\bN\times d\bN)&=2\int_M K\,\bX\times\bN\,dA.
\end{align*}
Setting $\bV=\bX$ in~\eqref{hid} and~\eqref{kid} gives Minkowski's formulas (cf.~\cite{SpivFive}, pp. 181-185):
\begin{align*}
\int_{\partial M}\bX\times\bN\cdot d\bX=-2\int_M(1+H\,\bX\cdot\bN)\,dA\\
\int_{\partial M}\bX\times\bN\cdot d\bN=2\int_M(H+K\,\bX\cdot\bN)\,dA.\end{align*}

\section{The Gauss-Bonnet And Poincar{\'e}-Hopf Theorems}

We next prove the Gauss-Bonnet theorem and the Poincar{\'e}-Hopf theorem using~\eqref{kid} and~\eqref{Ur}.  We start with the following simplified version of Liouville's formula (cf. \cite{Schaum}, problem 11.19):

\begin{claim} Suppose that there exists a constant vector $\bC$ with $\|\bC\|=1$ such that $\bC\cdot\bN\ne\pm1$ on $\partial M$.  Let $s$ denote arc length, with the subscript $s$ denoting the derivative with respect to arc length.  Let $\theta$ denote the angle between the unit tangent vector $\bX_s$ to $\partial M$ and the vector $\bC\times\bN$ tangent to $M$.  There on $\partial M$ there holds:
\begin{equation}
\theta_s=\kappa_g-\frac{\bC\cdot\bN}{1-(\bC\cdot\bN)^2}\,\bC\times\bN\cdot\bN_s\label{Liouville}
\end{equation}
where $\kappa_g$ is the geodesic curvature of $\partial M$.  
\end{claim}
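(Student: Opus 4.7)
The plan is to set up an orthonormal tangent frame along $\partial M$ in which the angle $\theta$ appears explicitly, and then to read off $\theta_s$ from the definition of $\kappa_g$. Concretely, set $h=\bC\cdot\bN$ (so $1-h^2=\|\bC\times\bN\|^2>0$ on $\partial M$), and let
\begin{equation*}
\bU=\frac{\bC\times\bN}{\sqrt{1-h^2}},\qquad \bU^\perp=\bN\times\bU.
\end{equation*}
A quick vector triple product calculation gives $\bU^\perp=(\bC-h\bN)/\sqrt{1-h^2}$, so $\{\bU,\bU^\perp,\bN\}$ is a right-handed orthonormal frame along $\partial M$. By definition of $\theta$,
\begin{equation*}
\bX_s=\cos\theta\,\bU+\sin\theta\,\bU^\perp.
\end{equation*}

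Next I would differentiate this in $s$ and extract the geodesic curvature, using the standard definition $\kappa_g=\bX_{ss}\cdot(\bN\times\bX_s)$. Since $\bN\times\bX_s=\cos\theta\,\bU^\perp-\sin\theta\,\bU$, a direct expansion (using $\bU\cdot\bU_s=0$, $\bU^\perp\cdot\bU^\perp_s=0$, and $\bU^\perp_s\cdot\bU=-\bU\cdot\bU^\perp_s \cdot\ldots$ — actually using $\bU_s\cdot\bU^\perp=-\bU\cdot\bU^\perp_s$ from differentiating $\bU\cdot\bU^\perp=0$) collapses all of the $\cos^2\theta$ and $\sin^2\theta$ coefficients and yields the clean identity
\begin{equation*}
\kappa_g=\theta_s+\bU_s\cdot\bU^\perp.
\end{equation*}

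It then remains to show $\bU_s\cdot\bU^\perp=\frac{h}{1-h^2}\,\bC\times\bN\cdot\bN_s$. Since $\bC$ is constant, $\bU_s=\frac{\bC\times\bN_s}{\sqrt{1-h^2}}+\frac{h\,h_s}{(1-h^2)^{3/2}}\,\bC\times\bN$, with $h_s=\bC\cdot\bN_s$. Dotting against $\bU^\perp=(\bC-h\bN)/\sqrt{1-h^2}$ and using $(\bC\times\bN)\cdot\bC=(\bC\times\bN)\cdot\bN=0$ and $(\bC\times\bN_s)\cdot\bC=0$, the only surviving term is the one containing $(\bC\times\bN_s)\cdot\bN$; this equals $-(\bC\times\bN)\cdot\bN_s$ by a scalar triple-product swap, and the stated formula for $\theta_s$ drops out immediately.

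The only real obstacle is bookkeeping: keeping sign conventions straight (orientation of $\bN\times\bT$ versus $\bT\times\bN$ in the definition of $\kappa_g$, and the sign flip in the triple product $(\bC\times\bN_s)\cdot\bN=-(\bC\times\bN)\cdot\bN_s$) and confirming that the $h_s$-term vanishes. Everything else is a routine orthonormal-frame computation that does not require the principal directions $\bP,\bQ$ or the identities~\eqref{Ur},~\eqref{Rod} — the lemma is purely about the geometry of $\partial M$ relative to the fixed direction $\bC$.
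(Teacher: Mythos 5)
Your proof is correct, and it takes a genuinely different route from the paper's. The paper starts from the relation $\tan\theta=(\bX_s\cdot\bC)/[\bX_s\,\bC\,\bN]$, differentiates that quotient, and then works through triple-product identities after projecting $\bX_{ss}$ and $\bN_s$ onto the frame $\{\bX_s,\bN,\bX_s\times\bN\}$. You instead build the rotating orthonormal tangent frame $\mathbf{U}=(\bC\times\bN)/\sqrt{1-h^2}$, $\mathbf{U}^\perp=\bN\times\mathbf{U}$, write $\bX_s=\cos\theta\,\mathbf{U}+\sin\theta\,\mathbf{U}^\perp$, and obtain the structural identity $\kappa_g=\theta_s+\mathbf{U}_s\cdot\mathbf{U}^\perp$; the entire content of the lemma is then the evaluation of the connection coefficient $\mathbf{U}_s\cdot\mathbf{U}^\perp$ of the frame determined by $\bC$, which your computation handles correctly (the $h_s$-term does vanish because $\bC\times\bN$ is orthogonal to both $\bC$ and $\bN$, and the surviving term picks up the sign flip $(\bC\times\bN_s)\cdot\bN=-(\bC\times\bN)\cdot\bN_s$ exactly as you say). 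Your route buys transparency: it explains why the correction term has the shape it does and avoids the quotient-rule denominator $[\bX_s\,\bC\,\bN]^2+(\bX_s\cdot\bC)^2$ entirely, while the paper's route is a more brute-force verification that needs no auxiliary frame. Two small points worth making explicit in a final write-up: first, writing $\bX_s=\cos\theta\,\mathbf{U}+\sin\theta\,\mathbf{U}^\perp$ fixes a sign convention for $\theta$ beyond ``the angle between $\bX_s$ and $\bC\times\bN$,'' but it agrees with the paper's convention $\tan\theta=(\bX_s\cdot\bC)/[\bX_s\,\bC\,\bN]$, since in your frame $\bX_s\cdot\bC=\sqrt{1-h^2}\,\sin\theta$ and $[\bX_s\,\bC\,\bN]=\sqrt{1-h^2}\,\cos\theta$; second, your definition $\kappa_g=\bX_{ss}\cdot(\bN\times\bX_s)$ coincides with the paper's $[\bX_s\,\bX_{ss}\,\bN]$ by cyclic permutation of the triple product, so the signs line up throughout.
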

\begin{proof}
We use ${\mathbf [}\;{\mathbf ]}$ to denote the triple product of vectors:
\begin{equation*}
[\mathbf v_1\mathbf v_2\mathbf v_3]=\mathbf v_1\cdot\mathbf v_2\times\mathbf v_3=\mathbf v_1\times\mathbf v_2\cdot\mathbf v_3=\det(\mathbf v_1,\mathbf v_2,\mathbf v_3).
\end{equation*} 
We have the relations:
\begin{equation*}
\bX_s\cdot\bX_s=1,\quad\bX_s\cdot\bN=0,\quad\bN\cdot\bN=0.\end{equation*}
Differentiating these with respect to $s$ gives:
\begin{equation*}
\bX_s\cdot\bX_{ss}=0,\quad\bX_{ss}\cdot\bN=-\bX_s\cdot\bN_s,\quad\bN_s\cdot\bN=0.\end{equation*} 
The geodesic curvature $\kappa_g$ is defined by:
\begin{equation*}\kappa_g=[\bX_s \bX_{ss} \bN]\end{equation*}
and the angle $\theta$ satisfies the relation:
\begin{equation*}
\tan\theta=\frac{\bX_s\cdot\bC}{[\bX_s \bC \bN]}.
\end{equation*}  
Differentiating $\tan^{-1}\theta$ gives:
\begin{equation*}
\theta_s=\frac{[\bX_s \bC \bN](\bX_{ss}\cdot\bC)-(\bX_s\cdot\bC)([\bX_{ss} \bC \bN]+[\bX_s \bC \bN_s])}
{[\bX_s \bC \bN]^2+(\bX_s\cdot\bC)^2}.\end{equation*}
By projecting onto the orthonormal basis $\{\bX_s,\bN,\bX_s\times\bN\}$ for $\Bbb R^3$ we get:
\begin{equation*}
\bX_{ss}=-(\bX_s\cdot\bN_s)\bN-\kappa_g\bX_s\times\bN\end{equation*}
\begin{equation*}
\bN_s=(\bX_s\cdot\bN_s)\bX_s+[\bN_s \bX_s \bN]\,\bX_s\times\bN,\quad\bN_s\times\bX_s=[\bN_s \bX_s \bN]\,\bN
\end{equation*}
\begin{equation*}
\|\bC\|^2=(\bC\cdot\bX_s)^2+(\bC\cdot\bN)^2+[\bC\cdot\bX_s\times\bN]^2=1.\end{equation*}
Therefore:
\begin{align*}
\theta_s&=
\frac{[\bX_s \bC \bN](\kappa_g[\bX_s \bC \bN]-(\bC\cdot\bN)(\bX_s\cdot\bN_s))}
{[\bX_s \bC \bN]^2+(\bX_s\cdot\bC)^2}\\
&\qquad+
\frac{\kappa_g(\bX_s\cdot\bC)^2-(\bX_s\cdot\bC)(\bC\cdot\bN_s\times\bX_s)}
{[\bX_s \bC \bN]^2+(\bX_s\cdot\bC)^2}\\
&=\kappa_g-\frac{\bC\cdot\bN}{1-(\bC\cdot\bN)^2}\left([\bX_s \bC \bN](\bX_s\cdot\bN_s)+(\bX_s\cdot\bC)[\bN_s \bX_s \bN]\right)\\
&=\kappa_g-\frac{\bC\cdot\bN}{1-(\bC\cdot\bN)^2}\left((\bX_s\times\bN)\cdot((\bX_s\cdot\bC)\bN_s-(\bX_s\cdot\bN_s)\bC\right)\\
&=\kappa_g-\frac{\bC\cdot\bN}{1-(\bC\cdot\bN)^2}\left((\bX_s\times\bN)\cdot(\bX_s\times(\bN_s\times\bC))\right)\\
&=\kappa_g-\frac{\bC\cdot\bN}{1-(\bC\cdot\bN)^2}[\bC \bN \bN_s]
\end{align*}
\end{proof}

We can now prove:
\begin{theorem}[Gauss-Bonnet]
\begin{equation}
\int_{\partial M}\kappa_g\,ds+\int_MK\,dA=2\pi\chi(M)\end{equation}
\end{theorem}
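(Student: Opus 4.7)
My plan is to combine Liouville's formula~\eqref{Liouville} with identity~\eqref{Ur}, then extract the Euler characteristic via an index count for the tangent vector field $\bC\times\bN$.

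Choose a unit vector $\bC$ generically so that $\bC\cdot\bN\ne\pm1$ on $\partial M$ and the set $S=\{p\in M:\bN(p)=\pm\bC\}$ is a finite set of interior points at which $\bC\times\bN$ has nondegenerate zeros; these are exactly the critical points of the height function $h=\bX\cdot\bC$, and a generic $\bC$ exists by Sard's theorem. Excise small geodesic disks $D_\epsilon(p)$, $p\in S$, to form $M_\epsilon$. On $M_\epsilon$ both $\theta$ and the vector field $\bV=\frac{\bC\cdot\bN}{1-(\bC\cdot\bN)^2}\bC\times\bN$ are well-defined. Applying~\eqref{Ur} to $\bV$ and $\bW=\bN$ and using $\bN_p=-\kappa_1\bP$, $\bN_q=-\kappa_2\bQ$ together with the identity $(\bC\cdot\bP)^2+(\bC\cdot\bQ)^2=1-(\bC\cdot\bN)^2$, a routine calculation collapses $\bV_p\cdot\bN_q-\bV_q\cdot\bN_p$ to $-K$. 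Integrating Liouville's formula around each component of $\partial M_\epsilon$ and substituting then yields
\begin{equation*}
\int_{\partial M_\epsilon}\kappa_g\,ds+\int_{M_\epsilon}K\,dA=\int_{\partial M_\epsilon}\theta_s\,ds.
\end{equation*}

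Now let $\epsilon\to0$. A small geodesic circle around $p\in S$, oriented as part of $\partial M_\epsilon$, contributes $-2\pi$ to the $\kappa_g$-integral (since $\kappa_g\sim1/\epsilon$) and $2\pi(\iota_p-1)$ to the $\theta_s$-integral, where $\iota_p$ is the index of $\bC\times\bN$ at $p$; a Taylor expansion of $\bN$ in principal directions at $p$ gives $\bC\times\bN\approx\kappa_2 v\bP-\kappa_1 u\bQ$, so $\iota_p=\operatorname{sign}(K(p))$. After cancellation of the $-2\pi\lvert S\rvert$ terms on both sides we arrive at
\begin{equation*}
\int_{\partial M}\kappa_g\,ds+\int_MK\,dA=\int_{\partial M}\theta_s\,ds+2\pi\sum_{p\in S}\iota_p.
\end{equation*}

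The main obstacle, which is purely topological, is to identify the right-hand side with $2\pi\chi(M)$. This is the Poincar\'e-Hopf index theorem for a surface with boundary, applied to $\bC\times\bN$: the sum of interior indices plus the boundary rotation $\frac{1}{2\pi}\int_{\partial M}\theta_s\,ds$ of $\bX_s$ relative to $\bC\times\bN$ must equal $\chi(M)$. I would establish this via Morse theory on $h$, whose nondegenerate critical points coincide with $S$ and whose Morse indices $0,1,2$ correspond to $\iota_p=+1,-1,+1$; the associated gradient-flow cell decomposition computes $\chi(M)$ as the alternating sum, with a boundary correction accounting for the failure of the gradient to point transversally along $\partial M$ that is precisely the rotation term above. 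This topological step is essentially the Poincar\'e-Hopf result the paper takes up next.
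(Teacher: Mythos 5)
Your analytic reduction is sound, and it is a genuinely different route from the paper's. The paper never lets the singular set $S$ appear: it triangulates $M$ finely enough that the Gauss image of each triangle lies in an open hemisphere, chooses a separate $\bC$ for each triangle, converts the correction term in~\eqref{Liouville} into $\int K\,dA$ triangle by triangle via~\eqref{kid}, and extracts $2\pi\chi(M)$ from the classical bookkeeping of exterior angles (Umlaufsatz plus Euler's formula for the triangulation). You instead keep a single global $\bC$, excise the zeros of $\bC\times\bN$, and push all of the topology into an index count. Your limit computations are correct: the excised geodesic circles contribute $-2\pi$ apiece to the $\kappa_g$ integral and $2\pi(\iota_p-1)$ apiece to the $\theta_s$ integral, and the linearization $\bC\times\bN\approx\kappa_2v\,\bP-\kappa_1u\,\bQ$ does give $\iota_p=\mathrm{sign}\,K(p)$.

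The gap is the final step. The identity
\begin{equation*}
\frac{1}{2\pi}\int_{\partial M}\theta_s\,ds+\sum_{p\in S}\iota_p=\chi(M)
\end{equation*}
is precisely the Poincar\'e--Hopf theorem with boundary rotation term, which this paper obtains only afterwards \emph{as a consequence of} Gauss--Bonnet; you cannot import it without circularity, so your argument stands or falls on the independent Morse-theoretic justification, and that is only asserted. Morse theory on a manifold with boundary when $\nabla h$ is not transverse to $\partial M$ does not directly produce a cell decomposition, and the claim that the resulting ``boundary correction'' equals your rotation integral is exactly the content to be proved. To close the gap you would need either (i) to homotope $\bC\times\bN$ near $\partial M$ to the boundary tangent field $\bX_s$, track the change in index sum by the relative winding $\frac{1}{2\pi}\int_{\partial M}\theta_s\,ds$, and then prove that a field agreeing with $\bX_s$ on $\partial M$ has index sum $\chi(M)$ (e.g.\ by a doubling or triangulation argument), or (ii) a combinatorial triangulation argument of essentially the same weight as the classical one the paper defers to. As written, the entire appearance of $\chi(M)$ rests on a sketch; everything before it is fine.
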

\begin{proof}
We triangulate $M$ as is done in classical proofs of the theorem (see for example~\cite{Schaum} pp. 242-246).  For our proof, we choose the triangles small enough so that the range of the Gauss map on each triangle lies properly within an open hemisphere.  This will ensure that, for each triangle, we can find a constant vector $\bC$ with $\|\bC\|=1$ and $\bC\cdot\bN\ne\pm1$ everywhere in the triangle.

We integrate~\eqref{Liouville} over the boundary of each triangle.  As in the classical proof, the $\theta_s$ term in~\eqref{Liouville} gives rise to the Euler characteristic term $\chi(M)$ when the exterior angles at the vertices of each triangle are enumerated.  To derive the surface integral we substitute:
\begin{equation*}
\bV=\frac{\bC\cdot\bN}{1-(\bC\cdot\bN)^2}\,\bC\end{equation*}
in~\eqref{kid}.  The resulting integrand over each triangle is then:
\begin{equation*}
\frac{1+(\bC\cdot\bN)^2}{(1-(\bC\cdot\bN)^2)^2}[(\bC\cdot\bP)^2+(\bC\cdot\bQ)^2]K-\frac{2(\bC\cdot\bN)^2}{1-(\bC\cdot\bN)^2}K,\end{equation*}   
which equates to $K$ because $\{\bP,\bQ,\bN\}$ is an orthonormal basis for $\Bbb R^3$ and:
\begin{equation*}
\|\bC\|^2=(\bC\cdot\bP)^2+(\bC\cdot\bQ)^2+(\bC\cdot\bN)^2=1.\end{equation*}
\end{proof}
The Poincare{\'e}-Hopf theorem will follow from the Gauss-Bonnet theorem and the following integral formula:
\begin{claim} If $\bV$ is a vector field on $M$ with $\|\bV\|=1$ then:
\begin{equation*}
\int_{\partial M}\bV\times\bN\cdot d\bV=\int_M K\,dA\end{equation*}
\end{claim}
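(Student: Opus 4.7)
The plan is to apply identity~\eqref{Ur} with the vector field $\bV\times\bN$ (extended smoothly from $M$ to an $\Bbb R^3$-neighborhood) in place of the first factor and $\bV$ in place of the second, yielding
\[
\int_{\partial M}(\bV\times\bN)\cdot d\bV = \int_M\bigl[(\bV\times\bN)_p\cdot\bV_q - (\bV\times\bN)_q\cdot\bV_p\bigr]\,dA.
\]
By the product rule together with~\eqref{Rod}, $(\bV\times\bN)_p = \bV_p\times\bN - \kappa_1\,\bV\times\bP$ and similarly for $(\bV\times\bN)_q$. Using the antisymmetry of the triple product to combine $\bV_p\times\bN\cdot\bV_q - \bV_q\times\bN\cdot\bV_p = 2\,\bV_p\times\bN\cdot\bV_q$, the integrand simplifies to
\[
2\,\bV_p\times\bN\cdot\bV_q \;-\; \kappa_1(\bV\times\bP)\cdot\bV_q \;+\; \kappa_2(\bV\times\bQ)\cdot\bV_p.
\]

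The crucial observation is that the triple product $\bV_p\times\bN\cdot\bV_q$ vanishes pointwise. Differentiating $\|\bV\|^2=1$ gives $\bV\cdot\bV_p = \bV\cdot\bV_q = 0$, and taking $\bV$ tangent to $M$ (as is implicit in the ensuing application to Poincar\'e--Hopf) also gives $\bV\cdot\bN = 0$. Hence $\bV_p$, $\bN$, and $\bV_q$ all lie in the two-dimensional plane orthogonal to the unit vector $\bV$ and are therefore linearly dependent, so $\bV_p\times\bN\cdot\bV_q = \det(\bV_p,\bN,\bV_q) = 0$.

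For the remaining two terms I would decompose $\bV = a\bP + b\bQ$ in the principal frame, with $a=\bV\cdot\bP$, $b=\bV\cdot\bQ$, and $a^2+b^2=1$. A direct computation gives $\bV\times\bP = -b\bN$ and $\bV\times\bQ = a\bN$, while differentiating $\bV\cdot\bN=0$ and using~\eqref{Rod} yields $\bV_p\cdot\bN = \kappa_1 a$ and $\bV_q\cdot\bN = \kappa_2 b$. Substituting, the integrand collapses to $\kappa_1\kappa_2(a^2+b^2) = K$, completing the argument. The main obstacle is spotting the vanishing of the triple product via the three-way orthogonality to $\bV$; thereafter everything reduces to straightforward algebra in the principal frame, which remains valid at umbilical points since~\eqref{Ur} permits $\bP$ and $\bQ$ to be discontinuous.
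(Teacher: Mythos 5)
Your proof is correct and follows essentially the same route as the paper: both apply~\eqref{Ur} to the pair $\bV\times\bN$ and $\bV$, kill the triple-product term by observing that $\bV_p$, $\bV_q$, $\bN$ are all orthogonal to the unit vector $\bV$, and reduce the remaining terms to $\kappa_1\kappa_2[(\bV\cdot\bP)^2+(\bV\cdot\bQ)^2]=K$ in the principal frame. Your explicit remark that $\bV\cdot\bN=0$ requires $\bV$ to be tangent to $M$ is a point the paper leaves implicit, but the argument is otherwise identical.
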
 
\begin{proof}
Using~\eqref{Ur} gives:
\begin{multline*}
\int_{\partial M}\bV\times\bN\cdot d\bV\\
=\int_M(-\kappa_1\bV\times\bP\cdot\bV_q+\kappa_2\bV\times\bQ\cdot\bV_p-2\,\bV_p\times\bV_q\cdot\bN)\,dA.
\end{multline*}
Differentiating the relation $\bV\cdot\bV=1$ gives:
\begin{equation*}
\bV_p\cdot\bV=\bV_q\cdot\bV=\bN\cdot\bV=0,\end{equation*}
which implies that all of $\bV_p$, $\bV_q$ and $\bN$ lie in a plane perpendicular to $\bV$ and the last term in the integral over $M$ is zero.  From the relation $\bV\cdot\bN=0$ we get:
\begin{equation*}
\bV\times\bP=-(\bV\cdot\bQ)\bN,\quad\bV\times\bQ=(\bV\cdot\bP)\bN\end{equation*}
and differentiating $\bV\cdot\bN=0$ gives:
\begin{equation*}
\bV_p\cdot\bN=\kappa_1\bV\cdot\bP,\quad\bV_q\cdot\bN=\kappa_2\bV\cdot\bQ.\end{equation*}
The surface integral therefore reduces to:
\begin{equation*}
\kappa_1\kappa_2[(\bV\cdot\bP)^2+(\bV\cdot\bQ)^2]=K.\end{equation*}
\end{proof}

\begin{corollary}If $\bV$ is a vector field on $M$ that does not vanish on $\partial M$ and has only isolated singularities on $M$ at points in the set $S$ then:
\begin{equation*}
\int_{\partial M}\frac{\bV\times\bN}{\|\bV\|^2}\cdot d\bV
=\int_M K\,dA-2\pi\sum_{s\in S}\mathrm{Index}_s(\bV)
\end{equation*}
\end{corollary}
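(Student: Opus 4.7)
The plan is to reduce the corollary to the previous claim by normalizing $\bV$ and excising small disks around the singularities. Set $\hat\bV = \bV/\|\bV\|$, let $D_\varepsilon(s)\subset M$ be small disjoint disks around each $s\in S$ containing no other zeros of $\bV$, and let $M_\varepsilon = M\setminus\bigcup_{s\in S}D_\varepsilon(s)$. On $M_\varepsilon$ the field $\hat\bV$ is a smooth unit tangent vector field to which the previous claim applies.

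The first step is an algebraic simplification: since $\bV$ is tangent to $M$, the triple product $(\bV\times\bN)\cdot\bV$ vanishes, and expanding $d\hat\bV=d\bV/\|\bV\|-\bV\,d\|\bV\|/\|\bV\|^2$ gives
\begin{equation*}
\hat\bV\times\bN\cdot d\hat\bV \;=\; \frac{\bV\times\bN}{\|\bV\|^2}\cdot d\bV.
\end{equation*}
Applying the previous claim to $\hat\bV$ on $M_\varepsilon$, whose boundary consists of $\partial M$ together with the loops $\partial D_\varepsilon(s)$ oriented oppositely to their orientation as disk boundaries, yields
\begin{equation*}
\int_{\partial M}\frac{\bV\times\bN}{\|\bV\|^2}\cdot d\bV \;-\; \sum_{s\in S}\int_{\partial D_\varepsilon(s)}\hat\bV\times\bN\cdot d\hat\bV \;=\; \int_{M_\varepsilon}K\,dA.
\end{equation*}
Since $K$ is bounded on the compact $M$ and the excised area tends to $0$, the right-hand side tends to $\int_M K\,dA$ as $\varepsilon\to 0$.

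The main task is then to show that each singular contribution satisfies
\begin{equation*}
\lim_{\varepsilon\to0}\int_{\partial D_\varepsilon(s)}\hat\bV\times\bN\cdot d\hat\bV \;=\; -2\pi\,\mathrm{Index}_s(\bV).
\end{equation*}
For this I would choose a smooth orthonormal tangent frame $\{\bU_1,\bU_2\}$ on a neighborhood of $s$ with $\bU_1\times\bU_2=\bN$, and write $\hat\bV=\cos\phi\,\bU_1+\sin\phi\,\bU_2$ on the punctured neighborhood, where $\phi$ is a smooth angle function. Using $\bU_1\times\bN=-\bU_2$, $\bU_2\times\bN=\bU_1$, $\bU_i\cdot d\bU_i=0$, and $\bU_1\cdot d\bU_2+\bU_2\cdot d\bU_1=0$, a direct expansion collapses all the cross terms and leaves
\begin{equation*}
\hat\bV\times\bN\cdot d\hat\bV \;=\; -d\phi + \bU_1\cdot d\bU_2.
\end{equation*}
The connection-form term $\bU_1\cdot d\bU_2$ is bounded near $s$ and the loop length is $O(\varepsilon)$, so its contribution vanishes in the limit. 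The integral of $-d\phi$ over the positively oriented loop is, by the very definition of the index, $-2\pi\,\mathrm{Index}_s(\bV)$, and substituting gives the formula.

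The main obstacle is this local calculation at each singularity. The crucial identity is $\hat\bV\times\bN\cdot d\hat\bV = -d\phi + \bU_1\cdot d\bU_2$, which isolates a total differential from an error that dies with the circle; combined with the orientation reversal picked up when $\partial D_\varepsilon(s)$ is viewed as part of $\partial M_\varepsilon$, this produces the correct sign $-2\pi\,\mathrm{Index}_s(\bV)$ needed on the right-hand side of the corollary.
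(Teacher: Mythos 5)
Your proposal is correct and follows essentially the same route as the paper: normalize $\bV$ via the identity $\frac{\bV\times\bN}{\|\bV\|^2}\cdot d\bV=\frac{\bV}{\|\bV\|}\times\bN\cdot d\bigl(\frac{\bV}{\|\bV\|}\bigr)$, apply the preceding unit-vector-field claim away from the singularities, and account for each singularity by a small loop whose integral tends to $-2\pi\,\mathrm{Index}_s(\bV)$. Your local computation $\hat\bV\times\bN\cdot d\hat\bV=-d\phi+\bU_1\cdot d\bU_2$ is a careful justification of the step the paper only sketches as ``the boundary integral approaches $-d\theta$.''
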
 
\begin{proof}
This follows from:
\begin{equation*}
\frac{\bV\times\bN}{\|\bV\|^2}\cdot d\bV=\frac{\bV}{\|\bV\|}\times\bN\cdot d\left(\frac{\bV}{\|\bV\|}\right)
\end{equation*}
and the fact that near singularities $s$ of $\bV$, the boundary integral approaches $-$``$d\theta$'' for the vector field projected onto the tangent plane to $M$ at $s$.
\end{proof}
Combining this with the Gauss-Bonnet theorem to eliminate the total curvature term gives:
\begin{theorem}[Poincar{\'e}-Hopf]If $\bV$ is a vector field on $M$ satisfying the same conditions as in the above corollary then:
\begin{equation*}
\chi(M)-\sum_{s\in S}\mathrm{Index}_s(\bV)
=\frac1{2\pi}\int_{\partial M}\left(\frac{\bV\times\bN}{\|\bV\|^2}\cdot d\bV+\kappa_g\,ds\right).
\end{equation*}  
\end{theorem}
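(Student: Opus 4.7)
The plan is to derive this essentially by algebraic rearrangement from the two results we already have on the table: the Gauss-Bonnet theorem
$$\int_{\partial M}\kappa_g\,ds+\int_M K\,dA=2\pi\chi(M)$$
and the corollary just proved,
$$\int_{\partial M}\frac{\bV\times\bN}{\|\bV\|^2}\cdot d\bV=\int_M K\,dA-2\pi\sum_{s\in S}\mathrm{Index}_s(\bV).$$
Both identities contain the total curvature $\int_M K\,dA$, so the strategy is simply to solve the Gauss-Bonnet identity for $\int_M K\,dA$ and substitute it into the corollary, then collect the two boundary integrals onto one side.

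Concretely, I would first write $\int_M K\,dA = 2\pi\chi(M)-\int_{\partial M}\kappa_g\,ds$ from Gauss-Bonnet and plug this into the corollary to get
$$\int_{\partial M}\frac{\bV\times\bN}{\|\bV\|^2}\cdot d\bV=2\pi\chi(M)-\int_{\partial M}\kappa_g\,ds-2\pi\sum_{s\in S}\mathrm{Index}_s(\bV).$$
Then I would move the $\kappa_g$ integral to the left-hand side, divide through by $2\pi$, and obtain the stated formula.

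There is no real obstacle here: the corollary has already done the work of relating the boundary winding of $\bV$ to total curvature minus the sum of indices, and Gauss-Bonnet has already related that same total curvature to $2\pi\chi(M)$ minus the boundary geodesic-curvature integral. The one thing worth remarking on is that the hypotheses match cleanly — the corollary requires $\bV$ to have only isolated singularities in the interior and to be nonvanishing on $\partial M$, which is exactly the hypothesis of the theorem, so no extra regularity needs to be checked before combining the two identities.
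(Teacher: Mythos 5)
Your proposal is correct and is exactly the paper's argument: the paper obtains the theorem by ``combining this [corollary] with the Gauss--Bonnet theorem to eliminate the total curvature term,'' which is precisely your substitution and rearrangement. Nothing further is needed.
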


The above result extends the Poincar{\'e}-Hopf theorem to vector fields that are not perpendicular to $\partial M$.  When $\bV$ is perpendicular to $\partial M$, the boundary integral reduces to zero and gives the traditional result. 

\section{Identities Containing The Difference Of Principal Curvatures}

We conclude by proving two identities that contain a term equal to the difference of the principal curvatures, instead of their sum or product:

\begin{claim}
For any twice differentiable function $F:\Bbb R^3\to\Bbb R$ there holds:
\begin{align*}
\int_{\partial M}\nabla F(\bX)\cdot d\bN&=-\int_M(\kappa_2-\kappa_1)[(\nabla^2 F(\bX))]\bP\cdot\bQ\\
\int_{\partial M}\nabla F(\bN)\cdot d\bX&= \int_M(\kappa_2-\kappa_1)[(\nabla^2 F(\bN))]\bP\cdot\bQ.
\end{align*}
\end{claim}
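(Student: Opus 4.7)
The plan is to apply the master identity~\eqref{Ur} directly to each of the two boundary integrals, with appropriate choices of $\bV$ and $\bW$, and then to use the symmetry of the Hessian together with the Rodrigues relations~\eqref{Rod} to produce the factor $\kappa_2-\kappa_1$.

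For the first identity, I would set $\bV=\nabla F(\bX)$ and $\bW=\bN$. The key observation is that on $M$ the position vector $\bX$ satisfies $\bX_p=\bP$ and $\bX_q=\bQ$ (since $\bP,\bQ$ are defined as unit tangent vectors), so by the chain rule $\bV_p=(\nabla^2 F(\bX))\bP$ and $\bV_q=(\nabla^2 F(\bX))\bQ$. Combined with $\bN_p=-\kappa_1\bP$ and $\bN_q=-\kappa_2\bQ$ from~\eqref{Rod}, identity~\eqref{Ur} produces the integrand
\[
\bV_p\cdot\bN_q-\bV_q\cdot\bN_p=-\kappa_2\,[(\nabla^2 F(\bX))\bP]\cdot\bQ+\kappa_1\,[(\nabla^2 F(\bX))\bQ]\cdot\bP.
\]
The symmetry of the Hessian collapses this to $-(\kappa_2-\kappa_1)[(\nabla^2 F(\bX))\bP]\cdot\bQ$, which is precisely the claimed integrand.

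For the second identity I would swap roles and set $\bV=\nabla F(\bN)$, $\bW=\bX$. Now $\bW_p=\bP$, $\bW_q=\bQ$, while $\bV_p=(\nabla^2 F(\bN))\bN_p=-\kappa_1(\nabla^2 F(\bN))\bP$ and similarly $\bV_q=-\kappa_2(\nabla^2 F(\bN))\bQ$ by~\eqref{Rod} and the chain rule. Substituting into~\eqref{Ur} and again invoking the symmetry of $\nabla^2 F(\bN)$ yields the integrand $(\kappa_2-\kappa_1)[(\nabla^2 F(\bN))\bP]\cdot\bQ$, matching the second claim.

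There is no real obstacle here: both identities are essentially automatic once one observes that $\bX$ and $\bN$ are exactly the two quantities whose directional derivatives in the principal directions are given in simple closed form, and that the Hessian's symmetry is what converts $\kappa_2+\kappa_1$-type combinations (which would arise if the Hessian were antisymmetric or if $F$ were linear) into the difference $\kappa_2-\kappa_1$. The only step one has to be careful with is the chain rule for the composition $\nabla F\circ\bX$ or $\nabla F\circ\bN$, which turns the vector-field directional derivative into an application of the Hessian matrix to the corresponding tangent vector.
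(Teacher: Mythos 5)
Your proposal is correct and is essentially identical to the paper's own proof: both apply identity~\eqref{Ur} with the pairs $(\nabla F(\bX),\bN)$ and $(\nabla F(\bN),\bX)$, using the chain-rule formulas $\nabla F(\bX)_p=(\nabla^2 F(\bX))\bP$, $\nabla F(\bN)_p=-\kappa_1(\nabla^2 F(\bN))\bP$, etc., together with~\eqref{Rod}. Your write-up is in fact slightly more complete, since you make explicit the use of the symmetry of the Hessian, which the paper leaves implicit.
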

\begin{proof}
These follow from~\eqref{Ur} and:
\begin{align*}
&\nabla F(\bX)_p=(\nabla^2 F(\bX))\bP &\nabla F(\bN)_p=-\kappa_1(\nabla^2 F(\bN))\bP\\
&\nabla F(\bX)_q=(\nabla^2 F(\bX))\bQ &\nabla F(\bN)_q=-\kappa_2(\nabla^2 F(\bN))\bQ.
\end{align*}
\end{proof}

\end{document}